\newtheorem{thm}{Theorem}[section]
\newcommand{\cl}{\hbox{\rm cl}}
\newcommand{\join}{\lor}
\newcommand{\meet}{\land}
\newcommand{\mcZ}{\mathcal{Z}}
\title{A Note on the Sticky Matroid Conjecture}
\date{\today} \author[J.\ Bonin]{Joseph E.~Bonin} \address
{Department of Mathematics\\ The George Washington University\\
  Washington, D.C. 20052} \email
{jbonin@gwu.edu} \subjclass{Primary: 05B35.  Secondary: 06C10.}
\keywords{Sticky matroid conjecture, bundle condition, cyclic flat}
\begin{document}

\begin{abstract}
  A matroid is sticky if any two of its extensions by disjoint sets
  can be glued together along the common restriction (that is, they
  have an amalgam).  The sticky matroid conjecture asserts that a
  matroid is sticky if and only if it is modular.  Poljak and Turzik
  proved that no rank-$3$ matroid having two disjoint lines is sticky.
  We show that, for $r\geq 3$, no rank-$r$ matroid having two disjoint
  hyperplanes is sticky.  These and earlier results show that the
  sticky matroid conjecture for finite matroids would follow from a
  positive resolution of the rank-$4$ case of a conjecture of Kantor.
\end{abstract}

\maketitle

\section{Introduction}

A matroid $M$ is \emph{sticky} if whenever the restrictions of any two
matroids $N$ and $N'$ to $E(N)\cap E(N')$ are equal to each other and
isomorphic to $M$, then $N$ and $N'$ have an amalgam, that is, a
matroid on the set $E(N)\cup E(N')$ having both $N$ and $N'$ as
restrictions.  Modular matroids are sticky; see~\cite[Theorem
12.4.10]{oxley}.  The sticky matroid conjecture, posed in~\cite{pt},
asserts the converse: sticky matroids are modular.

Poljak and Turzik~\cite{pt} showed that the conjecture holds for
rank-$3$ matroids.  Bachem and Kern~\cite{bk} showed that a rank-$4$
matroid is not sticky if the intersection of some pair of planes is a
point.  We prove that, for $r\geq 3$, a rank-$r$ matroid is not sticky
if it has a pair of disjoint hyperplanes.

Lemma 6 in~\cite{bk} says the conjecture holds for all matroids having
the following property.
\begin{quote}
  \emph{The intersection property}: whenever $(X,Y)$ is a non-modular
  pair of flats of $M$, there is a modular cut of $M$ that includes
  $X$ and $Y$ but not $X\cap Y$.
\end{quote}
We give a counterexample to an assertion used in the proof of the
lemma; we also show that the lemma is correct.  Using this lemma,
Bachem and Kern showed that the sticky matroid conjecture is true if
and only if it holds for rank-$4$ matroids.  They also show that for
rank-$4$ matroids, the intersection property is equivalent to the
following condition.
\begin{quote}
  \emph{The bundle condition}: given four lines in rank $4$ with no
  three coplanar, if five of the six pairs of lines are coplanar, then
  so is the sixth pair.
\end{quote}

Thus, future work on the conjecture can focus on rank-$4$ matroids in
which each pair of planes intersects in a line and in which the bundle
condition fails.  Modular matroids and their restrictions satisfy the
bundle condition, so these results imply that the sticky matroid
conjecture for finite matroids would follow from a positive resolution
of the rank-$4$ case of Kantor's conjecture~\cite{kantor}: for
sufficiently large $r$, if a finite rank-$r$ matroid $M$ has the
property that each pair of hyperplanes intersects in a flat of rank
$r-2$, then $M$ has an extension to a modular matroid.
(See~\cite[Example 5]{kantor} for the necessity of the finiteness
hypothesis in Kantor's conjecture.)

The results and proofs below apply to both finite and infinite
matroids.

\section{Background}

We assume familiarity with basic matroid theory, including
single-element extensions and modular cuts~\cite{crapo,oxley}.  We
will use the formulation of matroids via cyclic flats and their ranks
stated below.  A \emph{cyclic set} of a matroid is a union of
circuits.  It is easy to see that the cyclic flats of a matroid $M$
form a lattice; we denote this lattice by $\mcZ(M)$.
Brylawski~\cite{affine} observed that a matroid is determined by its
cyclic flats and their rank; the following result
from~\cite{juliethesis,cyclic} carries this further.

\begin{thm}\label{thm:axioms}
  Let $\mcZ$ be a collection of subsets of a set $S$ and let $r$ be
  an integer-valued function on $\mcZ$.  There is a matroid for
  which $\mcZ$ is the collection of cyclic flats and $r$ is the rank
  function restricted to the sets in $\mcZ$ if and only if
\begin{itemize}
\item[(Z0)] $\mcZ$ is a lattice under inclusion,
\item[(Z1)] $r(0_{\mcZ})=0$, where $0_{\mcZ}$ is the least element of
  $\mcZ$,
\item[(Z2)] $0<r(Y)-r(X)<|Y-X|$ for all sets $X,Y$ in $\mcZ$ with
  $X\subsetneq Y$, and  
\item[(Z3)] for all pairs of incomparable sets $X,Y$ in $\mcZ$,
  \begin{equation}\label{eq:semimod}  
    r(X)+r(Y)\geq r(X\join Y) + r(X\meet Y) + |(X\cap Y) - (X\meet
    Y)|.
  \end{equation}
\end{itemize}
\end{thm}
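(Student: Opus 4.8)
The proof has two directions, and I would dispatch the routine (``only if'') direction first. Suppose $M$ is a matroid, $\mcZ=\mcZ(M)$ is its lattice of cyclic flats, and $r$ is the rank function of $M$ restricted to $\mcZ$. As already noted, the cyclic flats form a lattice under inclusion; here $X\join Y=\cl(X\cup Y)$ and $X\meet Y$ is the closure of the union of all circuits of $M$ contained in $X\cap Y$ (one checks directly that these are cyclic flats with the required extremal properties), which gives (Z0). The least cyclic flat is the set of loops, so (Z1) holds. For (Z2): if $X\subsetneq Y$ lie in $\mcZ$ then $r(X)<r(Y)$ since $X$ is a proper flat of $M|Y$, while $r(Y)-r(X)=|Y-X|$ would force the nonempty set $Y-X$ to be independent in $M/X$, which a short submodularity argument shows is incompatible with $Y$ being a union of circuits. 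For (Z3): submodularity of $r$ gives $r(X)+r(Y)\ge r(X\cup Y)+r(X\cap Y)=r(X\join Y)+r(X\cap Y)$, so it suffices to prove $r(X\cap Y)\ge r(X\meet Y)+|(X\cap Y)-(X\meet Y)|$, i.e.\ that $(X\cap Y)-(X\meet Y)$ is independent in $M/(X\meet Y)$; this holds because a circuit of $M/(X\meet Y)$ lying in $X\cap Y$ would come from a circuit of $M$ lying in $X\cap Y$, hence lying in $X\meet Y$ by definition of the meet.

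For the converse, since a matroid is determined by its cyclic flats and their ranks (Brylawski's observation), it is enough to construct one matroid realizing the given data. The plan is to define, for $A\subseteq S$,
\[
  \rho(A)=\min_{Z\in\mcZ}\bigl(r(Z)+|A-Z|\bigr),
\]
the minimum being attained because these quantities are nonnegative integers. Nonnegativity of $\rho$, the bound $\rho(A)\le|A|$ (take $Z=0_{\mcZ}$ and use (Z1)), monotonicity, and the unit-increase property $\rho(A)\le\rho(A\cup e)\le\rho(A)+1$ follow at once, so $\rho$ is the rank function of a matroid on $S$ as soon as submodularity is checked; this is the heart of the matter. Given $A,B\subseteq S$, choose $X,Y\in\mcZ$ attaining $\rho(A)$ and $\rho(B)$, and bound $\rho(A\cup B)$ and $\rho(A\cap B)$ above using the test sets $X\join Y$ and $X\meet Y$, which lie in $\mcZ$ by (Z0). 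Substituting these four estimates into $\rho(A)+\rho(B)\ge\rho(A\cup B)+\rho(A\cap B)$ and cancelling, the claim reduces to the inequality
\[
  r(X)+r(Y)\ \ge\ r(X\join Y)+r(X\meet Y)+|(X\cap Y)-(X\meet Y)|,
\]
which is (Z3) when $X,Y$ are incomparable and is immediate when they are comparable, together with the purely set-theoretic inequality
\[
  |A-X|+|B-Y|+|(X\cap Y)-(X\meet Y)|\ \ge\ |(A\cup B)-(X\join Y)|+|(A\cap B)-(X\meet Y)|,
\]
which I would verify by showing that each element of $S$ contributes at least as much to the left-hand side as to the right.

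It then remains to check that the matroid $M=(S,\rho)$ has $\mcZ$ as its lattice of cyclic flats, with $r$ the restriction of $\rho$ to $\mcZ$. First, $\rho(Z)=r(Z)$ for every $Z\in\mcZ$: the inequality $\le$ is trivial, and for $\ge$ one verifies that every competing test set $Z'$ gives $r(Z')+|Z-Z'|\ge r(Z)$; the only nontrivial case, $Z$ and $Z'$ incomparable, is handled by (Z3), which reduces it to $r(Z\meet Z')+|Z-(Z\meet Z')|>r(Z)$, and that is exactly (Z2). The same sort of estimate shows that adjoining to $Z$ any element of $S-Z$ strictly increases $\rho$, so $Z$ is a flat of $M$, and that deleting any element of $Z$ leaves $\rho$ unchanged, so $M|Z$ has no coloops and $Z$ is cyclic. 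Conversely, let $F$ be any cyclic flat of $M$ and let $Z\in\mcZ$ attain $\rho(F)$; then $Z\subseteq F$ (otherwise adjoining an element of $Z-F$ to $F$ would not increase $\rho$, contradicting that $F$ is a flat) and in fact $Z=F$ (otherwise an element of $F-Z$ would be a coloop of $M|F$, contradicting that $F$ is cyclic), so $F\in\mcZ$.

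The step I expect to be the main obstacle is the submodularity of $\rho$. One cannot use $X\cap Y$ and $X\cup Y$ themselves as test sets, as they need not belong to $\mcZ$, so one is forced to the lattice operations $X\meet Y$ and $X\join Y$; the real insight is then that the ``defect'' term $|(X\cap Y)-(X\meet Y)|$ built into (Z3) is exactly what is needed to absorb the gap between the lattice meet and the set intersection in the accompanying cardinality inequality. Once submodularity is in hand, the identification of the cyclic flats of $M$ with $\mcZ$ is largely bookkeeping, but it too leans on (Z2) to prevent ranks from collapsing and to rule out spurious cyclic flats.
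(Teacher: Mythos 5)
The paper gives no proof of Theorem~\ref{thm:axioms}: it is quoted as a known result from \cite{juliethesis,cyclic}, so there is no internal argument to compare against. Your proposal is correct and follows the standard route of the cited source. For necessity: the join of cyclic flats is $\cl(X\cup Y)$ and the meet is the union of the circuits contained in $X\cap Y$ (that this union is a flat uses that $X\cap Y$ is itself a flat), and the extra term in (Z3) comes from $(X\cap Y)-(X\meet Y)$ being independent in $M/(X\meet Y)$, exactly as you argue. For sufficiency, the rank oracle $\rho(A)=\min_{Z\in\mcZ}(r(Z)+|A-Z|)$ works, and submodularity does reduce, as you say, to (Z3) together with the elementwise cardinality inequality, whose only tight case is $s\in A\cap B$ with $s$ in exactly one of $X$ and $Y$. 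Two points deserve explicit mention in a full write-up. First, to conclude that each $Z\in\mcZ$ is a flat of $(S,\rho)$ and that $M|Z$ has no coloops, you need the strict inequality $r(Z')+|Z-Z'|>r(Z)$ for every $Z'\in\mcZ$ with $Z'\neq Z$, not merely $\geq$; your (Z2)/(Z3) computation does yield strictness (in the incomparable case one also uses $r(Z\join Z')\geq r(Z)$, which comes from (Z2) rather than (Z3) alone), and it is precisely this slack of $1$ that survives deleting a point of $Z$ in the coloop check, so the proof should say so. Second, in showing that every cyclic flat $F$ of $(S,\rho)$ lies in $\mcZ$, the choice of a minimizing $Z$ and the two observations ($Z\subseteq F$ because $F$ is a flat, $F\subseteq Z$ because $F$ is cyclic) are exactly right. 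I see no genuine gap; the remaining work is bookkeeping of the kind you describe.
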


The V\'amos matroid (Figure~\ref{vamos}) motivates our constructions.
This rank-$4$ matroid on the set $\{a,a',b,b',c,c',d,d'\}$ has as its
nonempty, proper cyclic flats, all of rank $3$, all sets of the form
$\{x,x',y,y'\}$ except $\{a,a',d,d'\}$.  It does not satisfy the
bundle condition.

\begin{figure}
\begin{center}
\includegraphics[width = 2.25truein]{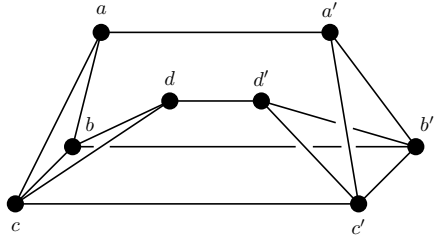}
\end{center}
\caption{The V\'amos matroid.}\label{vamos}
\end{figure}

\section{Results}

Bachem and Kern~\cite{bk} showed that contractions of sticky matroids
are sticky.  They noted a corollary of this result and that of Poljak
and Turzik: if two planes in a rank-$4$ matroid intersect in a point,
then the matroid is not sticky.  The case $r=4$ of the following
result addresses disjoint planes; the case $r=3$ is the result of
Poljak and Turzik.

\begin{thm}\label{thm:planes}
  For $r\geq 3$, a rank-$r$ matroid having two disjoint hyperplanes is
  not sticky.
\end{thm}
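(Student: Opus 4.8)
The plan is to show $M$ is not sticky by producing two extensions $N$ and $N'$ of $M$, on disjoint sets of new elements, that admit no amalgam. The only feature of the hypothesis I would use is that, since $r\ge 3$, the pair $(H_1,H_2)$ is non-modular: $r(H_1)+r(H_2)=2(r-1)$ while $r(H_1\vee H_2)+r(H_1\wedge H_2)=r+0$. Thus $H_1$ and $H_2$ \emph{ought} to meet in a flat of rank $r-2$ but meet in the empty flat, and — crucially — a modular cut of $M$ may then contain both $H_1$ and $H_2$ without containing $H_1\wedge H_2=\cl(\emptyset)$. This is exactly what allows extensions in which new elements lie in $\cl_N(H_1)\cap\cl_N(H_2)$ without being loops, and it is why the hypothesis concerns \emph{disjoint} hyperplanes.

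I would first carry out the case $r=3$, where $H_1=L_1$ and $H_2=L_2$ are disjoint lines; fix $a_1,a_2\in L_1$ and $b_1,b_2\in L_2$. The complete quadrilateral on $\{a_1,a_2,b_1,b_2\}$ has three ``diagonal positions'', namely $L_1\wedge L_2$, $\cl(\{a_1,b_1\})\wedge\cl(\{a_2,b_2\})$, and $\cl(\{a_1,b_2\})\wedge\cl(\{a_2,b_1\})$, and for a suitable choice of the $a_i,b_j$ these positions carry no point of $M$. Let $N$ adjoin three new elements $p,q,s$ at these positions with $\{p,q,s\}$ \emph{collinear}, and let $N'$ adjoin $p',q',s'$ at the same positions with $\{p',q',s'\}$ \emph{not} collinear; that each of $N,N'$ is a matroid I would check through Theorem~\ref{thm:axioms}, listing the cyclic flats and their ranks and verifying (Z0)--(Z3), where the non-modularity above is what keeps the new elements from collapsing. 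Now let $P$ be an amalgam. Both $p$ and $p'$ lie in $\cl_P(L_1)\cap\cl_P(L_2)$, a flat of rank at most $r-2=1$ since two distinct lines of a rank-$3$ matroid meet in at most a point, and neither is a loop (it is not one in $N$, resp.\ $N'$); hence $p$ and $p'$ are parallel in $P$, and likewise $q\parallel q'$ and $s\parallel s'$. Therefore $\cl_P(\{p,q\})=\cl_P(\{p',q'\})$, so $s\in\cl_P(\{p,q\})$ if and only if $s'\in\cl_P(\{p',q'\})$; but $P|E(N)=N$ forces $s\in\cl_P(\{p,q\})$ while $P|E(N')=N'$ forces $s'\notin\cl_P(\{p',q'\})$. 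This contradiction rules out an amalgam.

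For general $r\ge 3$ I would follow the same template, replacing the quadrilateral by a configuration of two points of $H_1$, two of $H_2$, and a rank-$(r-3)$ base flat, which yields three ``diagonal'' flats of rank $r-2$ — one of them the missing flat $H_1\wedge H_2$ — and taking $N$, $N'$ to fill these flats by new independent sets in, respectively, a ``bundle-satisfying'' and a ``bundle-violating'' way. In any amalgam, each diagonal flat from $N$ is forced to coincide with its counterpart from $N'$: both $\cl_P(H_1)\cap\cl_P(H_2)$ and its analogues have rank at most $r-2$, while the filling sets already attain rank $r-2$, so their closures agree; as in the rank-$3$ case this turns a single incidence among the three diagonal flats into something simultaneously forced by $N$ and forbidden by $N'$. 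The Vámos matroid is the guiding example of the bundle-violating side — take $H_1,H_2$ to be two of its disjoint planes. The main obstacle, in every rank, is the matroid verification via Theorem~\ref{thm:axioms}, and in higher rank the bookkeeping needed so that the diagonal flats, and the identifications forced in an amalgam, line up exactly as they do for the quadrilateral; one also has to check that a configuration with empty (or harmlessly occupied) diagonal positions can always be found inside $M$.
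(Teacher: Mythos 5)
Your amalgam argument is fine as far as it goes: in any amalgam the new points occupying a common ``diagonal position'' are parallel, so an incidence forced by $N$ and forbidden by $N'$ yields a contradiction. But the proof stands or falls on the \emph{existence} of the two incompatible extensions, and that is exactly the part you defer (``I would check through Theorem~\ref{thm:axioms}'', ``one also has to check that a configuration with empty (or harmlessly occupied) diagonal positions can always be found''). This is a genuine gap, not bookkeeping: there are matroids satisfying the hypothesis for which your construction cannot be carried out at all. Take $M=\PG(2,q)\del \{z\}$ with $q$ odd, and let $L_1,L_2$ be two lines through $z$ with $z$ removed; these are disjoint lines of $M$. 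For \emph{every} choice of $a_1,a_2\in L_1$ and $b_1,b_2\in L_2$, the cross-lines $a_ib_j$ avoid $z$, so both nontrivial diagonal positions are occupied by points $x_1,x_2$ of $M$: no ``suitable choice'' exists. The occupation is not harmless either: since $q$ is odd, the diagonal points $z,x_1,x_2$ of the quadrangle are not collinear in $\PG(2,q)$, so the line $\cl_M(\{x_1,x_2\})$ meets $L_1$ in an actual point $y$ of $M$. A single-element extension placing $p$ in $\cl(L_1)\cap\cl(L_2)\cap\cl(\{x_1,x_2\})$ corresponds to a modular cut containing $L_1$, $L_2$, and $\cl(\{x_1,x_2\})$; the modular pair $(L_1,\cl(\{x_1,x_2\}))$ forces $\cl(\{y\})$ into the cut, and the modular pair $(\cl(\{y\}),L_2)$ then forces $\cl(\emptyset)$ in, so $p$ is a loop. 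Hence the ``collinear'' completion does not exist for this $M$; only the standard modular-cut extension (which puts $p$ off the line $x_1x_2$) survives, and no contradiction can be manufactured from this configuration. The same rigidity problem, with more moving parts, afflicts your higher-rank template.

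The paper avoids this entirely by not trying to realize two incompatible configurations inside rank $r$. One extension, $M_P$, is the iterated modular-cut extension placing an independent $(r-2)$-set $P$ in $\cl(H)\cap\cl(H')$ (your non-modularity observation is precisely what makes this legal). The other extension $N$ goes \emph{up} to rank $r+1$: it adjoins $(r-1)$-sets $A$ and $B$ and decrees $H_1\cup A$, $H_1\cup B$, $H_2\cup A$, $H_2\cup B$ to be rank-$r$ cyclic flats inside a rank-$(r+1)$ matroid --- a V\'amos-like pattern whose axioms (Z0)--(Z3) are checked once and for all, with no dependence on the internal structure of $M$ and hence no case analysis about occupied positions. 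In any amalgam, the modular pairs $(H_1\cup A,H_2\cup A)$ and $(H_1\cup B,H_2\cup B)$ force $P\subseteq\cl(A)\cap\cl(B)$, and submodularity gives $r(P)\le 2(r-1)-(r+1)=r-3$, contradicting $r_{M_P}(P)=r-2$. To repair your argument you would either need to adopt some such rank-raising device, or exhibit a configuration inside $M$ whose two completions you can actually prove exist for \emph{every} $M$ with disjoint hyperplanes; as written, the construction is the entire difficulty of the theorem and your proposal does not supply it.
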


\begin{proof}
  Let $H$ and $H'$ be disjoint hyperplanes in a matroid $M$ of rank
  $r$.  In $M$, the set $\mathcal{M}=\{H,H',E(M)\}$ is a modular cut.
  If $r>3$, then, in the extension to $E(M)\cup p$ corresponding to
  $\mathcal{M}$, the set $\{H\cup p,H'\cup p,E(M)\cup p\}$ is a
  modular cut.  Continuing this way yields an extension $M_P$ of $M$
  to $E(M)\cup P$ in which $P$ is an independent set of size $r-2$
  with $P\subseteq \cl_{M_P}(H)\cap \cl_{M_P}(H')$.  To show that $M$
  is not sticky, we construct an extension $N$ of $M$ that contains no
  elements of $P$ and so that $N$ and $M_P$ have no amalgam.

  Add a point freely to $H$ (respectively, $H'$) if it is not already
  cyclic.  This gives a matroid $M'$ in which the flats
  $H_1=\cl_{M'}(H)$, $H_2=\cl_{M'}(H')$, and $E(M')$ are cyclic.
  (Constructing $M'$ is not essential; it makes the proof slightly
  easier to state.)  Fix two $(r-1)$-element sets $A$ and $B$ that are
  disjoint from each other and from $E(M')$.  We define the extension
  $N$ of $M'$ by its lattice of cyclic flats and their ranks.  The
  cyclic flats of $N$ are those of $M'$ (these have the same ranks in
  the two matroids) along with
  \begin{enumerate}
  \item $E(M')\cup A \cup B$ of rank $r+1$, and
  \item $H_1\cup A$, $H_1\cup B$, $H_2\cup A$, and $H_2\cup B$, all of
    rank $r$.
  \end{enumerate}
  (See Figure~\ref{rank4b}.)  To show that the resulting collection
  $\mcZ(N)$ is a lattice, it suffices to show that each pair $X,Y\in
  \mcZ(N)$ of incomparable sets has a join; if both $X$ and $Y$ are in
  $\mcZ(M')$, then their join is as in the lattice $\mcZ(M')$,
  otherwise it is $E(M')\cup A \cup B$.  Properties (Z1) and (Z2) in
  Theorem~\ref{thm:axioms} are easy to see, so we turn to (Z3).  Since
  $\mcZ(M')$ is a sublattice of $\mcZ(N)$ and since the function $r$
  on $\mcZ(N)$ extends that on $\mcZ(M')$,
  inequality~(\ref{eq:semimod}) in property (Z3) holds if
  $X,Y\in\mcZ(M')$.  Inequality~(\ref{eq:semimod}) is easy to check
  when $X$ and $Y$ are sets in item (2) above.  Lastly, by symmetry it
  suffices to consider $X=H_1\cup A$ and an incomparable flat $Y \in
  \mcZ(M')$.  Inequality~(\ref{eq:semimod}) follows easily in this
  case from two observations: (i) the flat $(H_1\cup A)\cap Y =
  H_1\cap Y$ of $M'$ has rank at most $r(Y)-1$ and (ii) $r(H_1\cap Y)
  = r(H_1\meet Y) + |(H_1\cap Y) - (H_1\meet Y)|$.  Thus, property
  (Z3) holds, so $N$ is indeed a matroid.

  \begin{figure}
    \begin{center}
      \includegraphics[width = 3.0truein]{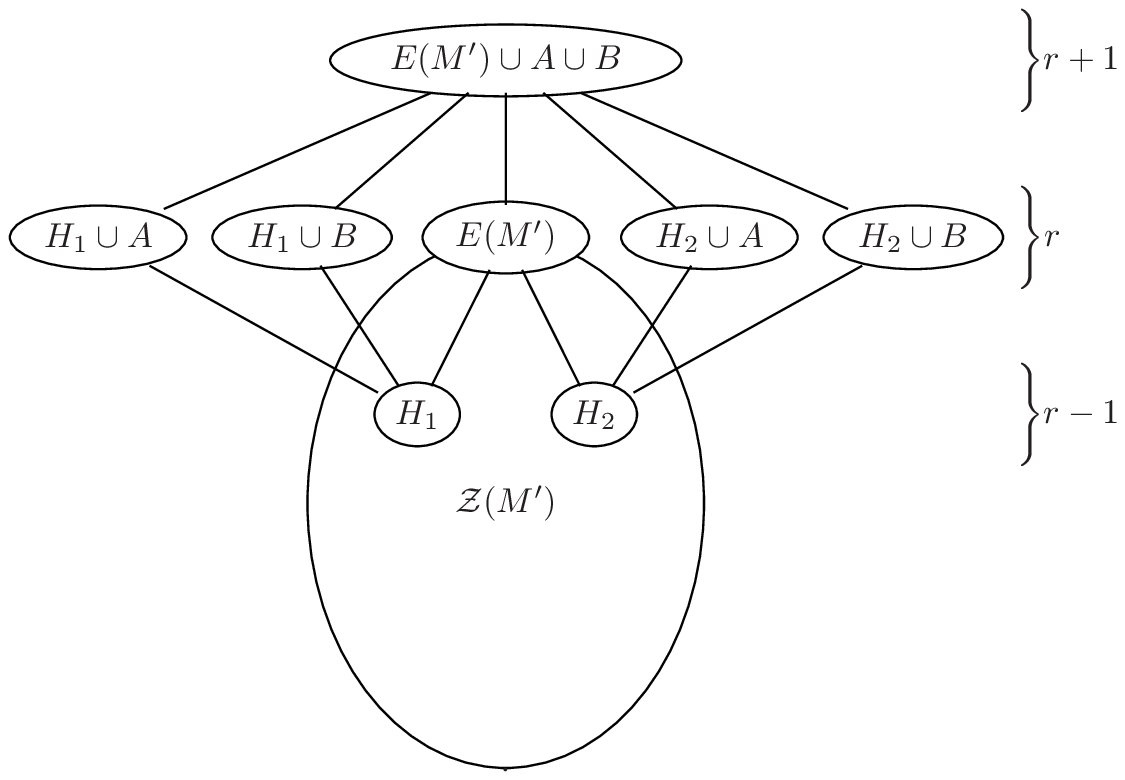}
    \end{center}
    \caption{The lattice $\mcZ(N)$ in the proof of
      Theorem~\ref{thm:planes}.}\label{rank4b}
  \end{figure}

  Finally, we prove that $N$ and $M_P$ have no amalgam by showing that
  in any extension $N'$ of $N$ to $E(N)\cup P$ with $P\subseteq
  \cl_{N'}(H)\cap \cl_{N'}(H')$ (i.e., $\cl_{N'}(H_1)\cap
  \cl_{N'}(H_2)$), we have $r_{N'}(P)\leq r-3$, which conflicts with
  $r_{M_P}(P)=r-2$.  Since $P\subseteq \cl_{N'}(H_1\cup A)$ and
  $P\subseteq \cl_{N'}(H_2\cup A)$, and since $(H_1\cup A,H_2\cup A)$
  is a modular pair of flats in $N$, we get $P\subseteq \cl_{N'}(A)$.
  Similarly, $P\subseteq \cl_{N'}(B)$.  Semimodularity gives
  $$r_{N'}(A\cup P)+ r_{N'}(B\cup P) 
  \geq r_{N'}(A\cup B\cup P) + r_{N'}(P),$$ that is $2(r-1)\geq r+1 +
  r_{N'}(P)$, so, as claimed, $r_{N'}(P)\leq r-3$.
\end{proof}

We now turn to~\cite[Lemma 6]{bk} and the flawed assertion used in its
proof.  Recast in matroid terms, the assertion is the following.
\begin{quote}
  If a rank-$r$ matroid $M$ contains three rank-$(r-2)$ flats $D_1$,
  $D_2$, and $D_3$, and a line $\ell_4$ such that $D_1\cup D_2$ spans
  $M$ but $D_1\cup D_3$, $D_2\cup D_3$, and $D_i\cup\ell_4$, for
  $i\in\{1,2,3\}$, span five different hyperplanes, then $M$ does not
  have the intersection property.  \cite[Example (b), p.~14.]{bk}
\end{quote}
For a counterexample, consider the rank-$5$ matroid $M$ that is
represented by the following matrix over $\mathbb{R}$ (or over any
field of characteristic other than $2$ or $3$).
\begin{equation*}
  \left(\begin{matrix}
      0 & 0 & 1 \\
      1 & 1 & 1 \\
      2 & 3 & 4 \\
      0 & 0 & 0 \\
      0 & 0 & 0
    \end{matrix}\right.
  \left|\, \begin{matrix}
      0 & 0 & 1 \\
      0 & 0 & 0 \\
      0 & 0 & 0 \\
      1 & 1 & 1 \\
      2 & 3 & 4 
    \end{matrix}\right.
  \left|\, \begin{matrix}
      0 & 0 & 1 \\
      0 & 0 & 0 \\
      1 & 1 & 1 \\
      2 & 3 & 4 \\
      0 & 0 & 0 
    \end{matrix}
  \right.
  \left|\, \begin{matrix}
      1 & 0 \\
      1 & 1 \\
      1 & 1 \\
      1 & 1 \\
      1 & 1  
    \end{matrix}
\right)
\end{equation*}
The bars separate four groups of columns corresponding to the three
planes $D_1$, $D_2$, $D_3$, and the line $\ell_4$ respectively.  Since
$M$ is representable over $\mathbb{R}$, it has the intersection
property.  The following observations show that it satisfies the
hypotheses of the claim.  The union $D_1\cup\ell_4$ is the flat of
rank $4$ that consists of all columns in which the last two entries
are equal.  Similarly, $D_2\cup\ell_4$ is the rank-$4$ flat of that
consists of all columns in which the second and third entries are
equal, and $D_3\cup\ell_4$ is the rank-$4$ flat that consists of all
columns in which the second and last entries are equal.  The union
$D_1\cup D_2$ spans $M$, while $D_1\cup D_3$ is the hyperplane
consisting of all columns whose last entry is zero, and $D_2\cup D_3$
is the hyperplane consisting of all columns whose second entry is
zero.

We next offer a proof of~\cite[Lemma 6]{bk}.

\begin{thm}\label{thm:ip}
  For $r\geq 4$, if a rank-$r$ matroid $M$ has a line $\ell$ and
  hyperplane $H$ that are disjoint, then $M$ has a loopless extension
  $N$ with $\cl_{N'}(\ell)\cap \cl_{N'}(H)=\emptyset$ for all loopless
  extensions $N'$ of $N$.  Thus, if $M$ also has the intersection
  property, then it is not sticky.
\end{thm}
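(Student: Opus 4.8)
The plan is to mimic the structure of the proof of Theorem~\ref{thm:planes}, but now producing an extension $N$ whose rigidity forces $\ell$ and $H$ to stay disjoint in every loopless further extension, rather than merely bounding the rank of a common ``diagonal.'' First I would pass to a matroid $M'$ in which both $\ell$ and $H$ are cyclic flats: add a point freely to $\ell$ if $\ell$ is not already cyclic (so it becomes a rank-$2$ cyclic flat $L$), and add a point freely to $H$ if it is not already cyclic (so it becomes a rank-$(r-1)$ cyclic flat $H_1$); since $\ell$ and $H$ are disjoint and $H$ is a hyperplane, we have $L\cap H_1=\emptyset$ and $L\join H_1 = E(M')$, with the modular-pair inequality $r(L)+r(H_1) = 2 + (r-1) = r+1 > r = r(E(M'))$ failing to be an equality, so $(L,H_1)$ is a \emph{non-modular} pair of flats.

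Next I would adjoin new ground-set elements and new cyclic flats so that, in any loopless extension $N'$, an element lying in both $\cl_{N'}(L)$ and $\cl_{N'}(H_1)$ is forced to be a loop. The natural device is to add two disjoint auxiliary sets $A,B$ (disjoint from $E(M')$ and from each other), together with cyclic flats $H_1\cup A$, $H_1\cup B$, $L\cup A$, $L\cup B$ of appropriate ranks, and a top cyclic flat $E(M')\cup A\cup B$, choosing the ranks so that $(H_1\cup A, H_1\cup B)$ is a modular pair with meet $H_1$, and similarly for $L$; then any $p\in\cl_{N'}(L)\cap\cl_{N'}(H_1)$ lies in $\cl_{N'}(A)$ (pushing through the $H_1$-pair) and in $\cl_{N'}(B)$ (pushing through the $L$-pair, say), and an analogous second push forces $p\in\cl_{N'}(A\meet B)$; if $A\meet B$ is arranged to be spanned by loops — equivalently, if $A$ and $B$ are made into cocircuits-complements whose intersection flat has rank $0$ — then $p$ is a loop. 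The ranks must be pinned down precisely: I expect $|A|=|B|=r-1$ with $r_N(A)=r_N(B)=r-1$, $r_N(H_1\cup A)=r$, $r_N(L\cup A)=$ something like $r$ or $r-1$ depending on bookkeeping, and $r_N(E(M')\cup A\cup B)=r+1$, exactly as in Theorem~\ref{thm:planes}, and then the semimodular estimate $r_{N'}(A\cup P)+r_{N'}(B\cup P)\ge r_{N'}(A\cup B\cup P)+r_{N'}(P)$ collapses the common part. I would then verify (Z0)--(Z3) for $\mcZ(N)$ exactly as before: joins are inherited from $\mcZ(M')$ or equal the new top; (Z1), (Z2) are immediate; and (Z3) reduces, by symmetry, to the case of one new flat $X$ against one old flat $Y\in\mcZ(M')$, where the two observations ``$(X\cap Y)$ is an old flat of rank $\le r(Y)-1$'' and ``$r(X\cap Y)=r(X\meet Y)+|(X\cap Y)-(X\meet Y)|$'' again do the work.

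Finally, given the extension $N$ produced above, the ``thus'' clause is the payoff: if $M$ also has the intersection property, apply it to the non-modular pair $(L,H_1)$ (or $(\ell,H)$ in $M$ itself), obtaining a modular cut $\mathcal{K}$ containing $\ell$ and $H$ but not $\ell\cap H=\emptyset$; the single-element extension of $M$ by a point $q$ with modular cut $\mathcal{K}$ places $q\in\cl(\ell)\cap\cl(H)$ with $q$ not a loop, and iterating (as in the opening paragraph of the proof of Theorem~\ref{thm:planes}, adding further points to keep building up the common flat) yields an extension $M_Q$ of $M$ in which $\cl(\ell)\cap\cl(H)$ has positive rank; but $M_Q$ and $N$ can have no amalgam, since in any common extension the elements of $Q$ would have to be simultaneously non-loops (as in $M_Q$) and loops (forced by $N$ as above) — contradiction. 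Hence $M$ is not sticky.

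The main obstacle I anticipate is not the amalgam argument but getting the ranks of the new cyclic flats to be mutually consistent so that (Z3) holds \emph{and} the double push in a loopless $N'$ genuinely lands in a rank-$0$ flat: unlike Theorem~\ref{thm:planes}, where it sufficed to bound $r_{N'}(P)$ from above, here we need the stronger conclusion that $\cl_{N'}(L)\cap\cl_{N'}(H_1)$ contains only loops, so the configuration of $A$, $B$ and their interactions with both $L$ and $H_1$ must be rigid enough to squeeze the common part all the way down to nothing; choosing $|A|=|B|=r-1$ (one less than $r(E(M'))$, mirroring the hyperplane case) and checking that the resulting top flat has rank exactly $r+1$ is the delicate point, and the loopless hypothesis on $N'$ is exactly what prevents the degenerate escape of an element sitting in $\cl_{N'}(A)\cap\cl_{N'}(B)$ without being a loop.
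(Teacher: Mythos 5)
There is a genuine gap, and it is precisely at the point you flag as ``the delicate point'': your configuration does not actually squeeze a common point of $\cl_{N'}(\ell)$ and $\cl_{N'}(H)$ down to a loop. With $A$ and $B$ \emph{disjoint} sets of rank $r-1$ in a rank-$(r+1)$ matroid, the pair $(A,B)$ is far from modular: $r(A)+r(B)=2r-2$ while $r(A\join B)+r(A\meet B)=(r+1)+0$, so the semimodular estimate you write down only yields $r_{N'}(p)\le r-3$ for a point $p\in\cl_{N'}(A)\cap\cl_{N'}(B)$, which is vacuous for a single element once $r\ge 4$. (This bound is exactly what suffices in Theorem~\ref{thm:planes}, where one only needs to cap the rank of an $(r-2)$-set $P$; here you need rank $0$, and disjoint auxiliary sets cannot deliver it.) There is also a misidentification of the modular pairs doing the pushing: if $(H_1\cup A,H_1\cup B)$ is a modular pair with meet $H_1$, then a point in both closures lands in $\cl_{N'}(H_1)$, which is where you started, not in $\cl_{N'}(A)$. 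The useful pairs are of the form $(\ell\cup A,\,H_1\cup A)$, whose meet is $\cl(A)$.

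The paper's key idea, which your proposal is missing, is to make the two auxiliary sets \emph{overlap}: it takes an $(r-3)$-element set $A$ added \emph{freely to $H$}, and two $(r-1)$-element supersets $D_1,D_2$ of $A$ whose parts outside $A$ are new and disjoint. The new cyclic flats are $D_i\cup H'$ and $D_i\cup\ell$ (rank $r$) plus the top (rank $r+1$). Then three successive modular-pair contractions finish the job: $(D_i\cup\ell,\,D_i\cup H')$ is modular with meet $D_i$, forcing $q\in\cl_{N'}(D_1)\cap\cl_{N'}(D_2)$; $(D_1,D_2)$ is modular with meet $\cl(A)=A$ (here $2(r-1)=(r+1)+(r-3)$ is where the overlap is essential), forcing $q\in\cl_{N'}(A)$; and finally $(A,\ell)$ is a \emph{disjoint modular pair} because $A$ sits freely inside the hyperplane $H$ while $\ell$ is disjoint from $H$, forcing $q$ to be a loop. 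Your instinct that ``$A\meet B$ must be spanned by loops'' cannot be realized with disjoint $A,B$; the resolution is that the final squeeze is not between the two auxiliary sets at all, but between their common part $A$ and the line $\ell$ itself. Your ``thus'' clause (using the intersection property on the non-modular pair $(\ell,H)$ to build an extension with a non-loop in $\cl(\ell)\cap\cl(H)$, hence no amalgam with $N$) is correct and matches the intended use of the theorem.
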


\begin{proof}
  Let $A$ be an $(r-3)$-element set disjoint from $E(M)$.  Obtain $M'$
  from $M$ by adding the elements of $A$ freely to $H$.  Let $H' =
  H\cup A$.  Fix $(r-1)$-element supersets $D_1$ and $D_2$ of $A$ with
  $D_1-A$ and $D_2-A$ disjoint from each other and from $E(M')$.  The
  ground set of $N$ will be $E(M') \cup D_1 \cup D_2$.  We obtain
  $\mcZ(N)$ by adjoining to $\mcZ(M')$ the sets $E(M') \cup D_1 \cup
  D_2$ (of rank $r+1$) and $D_1\cup H'$, $D_1\cup \ell$, $D_2\cup H'$,
  and $D_2\cup \ell$ (all of rank $r$).  As above, properties
  (Z0)--(Z3) of Theorem~\ref{thm:axioms} hold.

  We now show that if $N'$ is a single-element extension of $N$ on the
  set $E(N)\cup \{q\}$ and if $q\in\cl_{N'}(\ell)\cap \cl_{N'}(H')$,
  then $q$ is a loop of $N'$.  Note that $(D_1\cup \ell,D_1\cup H')$
  is a modular pair of flats in $N$ and $q$ is in the closures, in
  $N'$, of both sets; therefore $q\in \cl_{N'}(D_1)$.  Similarly,
  $q\in \cl_{N'}(D_2)$.  Since $(D_1,D_2)$ is a modular pair of flats
  in $N$, we get $q\in\cl_{N'}(A)$.  The elements of $A$ were added
  freely to $H$, so $(A,\ell)$ is a modular pair of flats of $N$.
  Moreover, $A$ and $\ell$ are disjoint and $q\in\cl_{N'}(\ell)\cap
  \cl_{N'}(A)$, so it follows that $a$ is a loop of $N'$.
\end{proof}

Bachem and Kern~\cite{bk} showed that a rank-$4$ matroid satisfies the
intersection property if and only if it satisfies the bundle
condition.  (A careful reading of their proof reveals gaps; however,
the gaps can be filled with the type of argument they use.)  One
direction of this equivalence is transparent.  To highlight how the
bundle condition enters from the perspective of modular cuts, we give
a brief alternate proof of the more substantial direction.

\begin{thm}\label{thm:bundleimpliesip}
  For rank-$4$ matroids, the bundle condition implies the intersection
  property.
\end{thm}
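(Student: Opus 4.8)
The plan is to verify the intersection property directly. Let $M$ be a rank-$4$ matroid satisfying the bundle condition, and let $(X,Y)$ be a non-modular pair of flats. I must produce a modular cut $\mathcal{M}$ of $M$ with $X,Y\in\mathcal{M}$ but $X\meet Y\notin\mathcal{M}$. Since $(X,Y)$ is non-modular, $r(X)+r(Y)>r(X\join Y)+r(X\wedge Y)$; in rank $4$ the only way this happens is that $X$ and $Y$ are two lines meeting in a point (so $r(X)=r(Y)=2$, $r(X\join Y)=4$, $r(X\wedge Y)=1$). So the whole problem reduces to: given two lines $\ell_1,\ell_2$ through a common point $p$ whose union spans $M$, find a modular cut containing $\ell_1$ and $\ell_2$ but not $p$.

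The natural candidate is the set $\mathcal{M}$ of all flats $F$ of $M$ such that $F$ contains a line $\ell$ with $\ell\vee\ell_1$ and $\ell\vee\ell_2$ both planes (equivalently, $\ell$ is coplanar with each of $\ell_1$ and $\ell_2$), together with all flats of rank $\ge 3$ that arise as a join $\ell\vee\ell_1$ or $\ell\vee\ell_2$ of such an $\ell$, and $E(M)$ itself; more cleanly, I would take $\mathcal{M}$ to be the upward closure of the set of lines coplanar with both $\ell_1$ and $\ell_2$. First I would check this set is nonempty and does not contain $p$ (any line through $p$ other than $\ell_1,\ell_2$ is coplanar with each of them only if $M$ has small rank, so genuinely one must exclude those; the point $p$ itself is rank $1$, hence not in an upward closure of lines unless some line equals a point, impossible). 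The key closure axiom for a modular cut is: if $F_1,F_2\in\mathcal{M}$ and $(F_1,F_2)$ is a modular pair, then $F_1\meet F_2\in\mathcal{M}$. The only nontrivial case is $F_1,F_2$ both lines forming a modular pair, i.e. two coplanar lines, each coplanar with both $\ell_1$ and $\ell_2$; their meet is a point $q$, and I must show $q$ lies on some line coplanar with both $\ell_1$ and $\ell_2$ — but $F_1$ itself is such a line through $q$, so this is immediate. The remaining cases — one of $F_1,F_2$ of rank $\ge 3$, or the meet having rank $\ge 2$ — put $F_1\meet F_2$ above a qualifying line automatically. Finally, upward closure is built in.

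The place the bundle condition must be invoked is in checking that $\mathcal{M}$ is a proper modular cut, i.e. $E(M)\in\mathcal{M}$ is fine but we need $p\notin\mathcal{M}$ and, crucially, that $\ell_1$ and $\ell_2$ really lie in $\mathcal{M}$: $\ell_1$ is coplanar with itself trivially, and I need $\ell_1$ coplanar with $\ell_2$ — but that is given, since $\ell_1,\ell_2$ meet in $p$. So $\ell_1,\ell_2\in\mathcal{M}$ directly. Where does the bundle condition enter? It enters when a modular pair $(F_1,F_2)$ of lines in $\mathcal{M}$ has $F_1,F_2,\ell_1,\ell_2$ in "general position" — four lines, pairs $F_1\ell_1$, $F_1\ell_2$, $F_2\ell_1$, $F_2\ell_2$, $F_1F_2$ coplanar (five of the six pairs), forcing $(\ell_1,\ell_2)$ coplanar as the sixth; but $(\ell_1,\ell_2)$ is already coplanar. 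The real use is the reverse configuration: to show $F_1\meet F_2 = q$ is connected back to $\ell_1,\ell_2$ when $q\notin\ell_1\cup\ell_2$, one sets up the four lines $\ell_1,\ell_2,F_1$, and a fourth line and applies the bundle condition to deduce the needed coplanarity. I expect this — pinning down exactly which four-line configuration triggers the bundle condition and confirming no three of them are coplanar — to be the main obstacle; the modular-cut bookkeeping around it is routine.
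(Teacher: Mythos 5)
Your reduction of the non-modular pairs is wrong, and the error is fatal to the whole plan. In any matroid, two lines meeting in a point form a \emph{modular} pair: submodularity gives $r(\ell_1\cup\ell_2)\le r(\ell_1)+r(\ell_2)-r(\ell_1\cap\ell_2)=3$, so their join is a plane and $2+2=3+1$. In particular, the configuration you propose to analyze --- two lines through a common point with $r(X\join Y)=4$ --- cannot occur at all. The non-modular pairs in a rank-$4$ matroid that actually require a modular cut are: two planes whose intersection has rank at most $1$; a plane and a line disjoint from it; and two \emph{disjoint} coplanar lines. You address none of these. The first two are easy (a filter generated by the flats in question suffices), and the only place the bundle condition is genuinely needed is the case of two disjoint lines $\ell_1,\ell_2$ spanning a common plane $P$.

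Even transplanted to that correct setting, your candidate cut --- the upward closure of all lines coplanar with both $\ell_1$ and $\ell_2$ --- fails. Every line contained in $P$ is coplanar with both (all the relevant joins equal $P$), and such a line can meet $\ell_1$ in a point $q$; the pair $(\ell,\ell_1)$ is then a modular pair inside your cut whose meet is $q$, so $q$ is forced into the cut, and a second such point forces in the empty flat $X\cap Y$, which is exactly what must be excluded. The paper's construction avoids this by generating the filter from a more carefully chosen family $\mathcal{L}$: the lines $\ell_1,\ell_2$, the lines \emph{outside} $P$ coplanar with both, and only those lines inside $P$ that are coplanar with at least one of the lines outside $P$ just described. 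The bundle condition is then invoked (on genuine four-line configurations) to show that the members of $\mathcal{L}$ are pairwise disjoint and suitably coplanar, so the filter they generate is a modular cut avoiding $\emptyset$. Your sketch neither identifies this obstruction nor exhibits the configurations to which the bundle condition is applied, and it defers exactly that verification as ``the main obstacle.''
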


\begin{proof}
  Let $M$ be a rank-$4$ matroid in which the bundle condition holds.
  We need to show that for each non-modular pair of flats $(X,Y)$ in
  $M$, there is a modular cut of $M$ that contains $X$ and $Y$ but not
  $X\cap Y$.  If $X$ and $Y$ are planes, then $\{X,Y,E(M)\}$ is the
  required modular cut.  If $X$ is a plane, $Y$ is a line, and $Y$ is
  not coplanar with any line in $X$, then the filter of flats
  generated by $X$ and $Y$ is the required modular cut. Thus, only the
  case of disjoint coplanar lines remains to be addressed.

  Let $\ell_1$ and $\ell_2$ be disjoint lines in the plane $P$ of $M$.
  Consider the set $\mathcal{L}$ that is the union of the following
  three sets: $\{\ell_1,\ell_2\}$, the set $\mathcal{L}_{\bar{P}}$ of
  all lines not in the plane $P$ that are coplanar with both $\ell_1$
  and $\ell_2$, and the set $\mathcal{L}_{P}$ of all lines in $P$ that
  are coplanar with at least one line in $\mathcal{L}_{\bar{P}}$.  The
  bundle condition shows that $\mathcal{L}$ has the following
  properties.
  \begin{itemize}
  \item[(a)] All lines in  $\mathcal{L}_{\bar{P}}$  are coplanar. 
  \item[(b)] Lines in $\mathcal{L}_{P}$ are coplanar with all lines in
    $\mathcal{L}_{\bar{P}}$.
  \item[(c)] Any line that is in two distinct planes with two lines of
    $\mathcal{L}$ is also in $\mathcal{L}$.
  \end{itemize}
  Furthermore, any two lines in $\mathcal{L}$ are disjoint.  It
  follows that the filter that $\mathcal{L}$ generates is a modular
  cut.  Thus, the intersection property holds.
\end{proof}

\end{document}